\newtheorem{theorem}{Theorem}
\newtheorem{lemma}{Lemma}
\newtheorem{corollary}{Corollary}
\newtheorem{theo}{Theorem}
\newtheorem{coro}{Corollary}
\newtheorem{lem}{Lemma}
\theoremstyle{definition}
\theoremstyle{remark}
\newtheorem*{remark}{Remark}
\def\R{\mathbb{R}}
\title{Hausdorff measures, dyadic approximations and Dobi\'nski set}
\author{Alberto Dayan, Jos\'e L. Fern\'andez, Mar\'ia J. Gonz\'alez }
\subjclass[2010]{11K55, 28A78, 30C85}
\keywords{Hausdorff measures, Mass Transference Principle, dyadic approximation, Dobi\'nski set, logarithmic capacity.}
\date{\today}
\begin{document}

	\begin{abstract}
	Dobi\'nski set $\mathcal{D}$ is an exceptional set for a certain infinite product identity, whose points are characterized as having exceedingly good approximations by dyadic rationals.	We study the Hausdorff dimension and logarithmic measure of  $\mathcal{D}$ by means of the Mass Transference Principle and by the construction of certain  appropriate Cantor-like sets, termed willow sets,  contained in $\mathcal{D}$.
	\end{abstract}
	
	\maketitle

	\parskip=1.5mm

	\section{G. Dobi\'nski}
Two formulas in the mathematical literature are named after our G. Dobi\'nski.\footnote{Little we know  about G. Dobi\'nski.  He  published several notes, \cite{dob2}, \cite{dob1}, \cite{dob3}, \cite{dob4}, \cite{dob5} and \cite{dob6} in the Archiv der Mathematik und Physik between 1876 and 1879. The Archiv der Mathematick und Physik proclaims that \textit{mit besonderer R{\"{u}}cksicht auf die Bed{\"{u}}rfnisse der Lehrer an h{\"{o}}heren  Unterrichtsanstalten}.  His name appears misspelled as Dobiciecki in \cite{dob1}, but it is corrected in the index of that issue. Dobi\'nski gives as his affiliation  \textit{Techniker in Warschau}, and  \textit{Techniker an der Eisenbahn in Kutno} or \textit{Techniker der Warschau-Bromberger Eisenbahn}. } Two mathematical eponyms out of a handful of papers; not a bad batting average.

The content of Dobi\'nski's paper \cite{dob2} is the presentation of the following  formula for the   $n^{\rm th}$ Bell number:
$$(\star) \qquad B_n=\frac{1}{e}\sum_{k=0}^\infty \frac{k^n}{k!}\, , \quad \mbox{for each $n \ge 0$}\,,$$
where $0^0=1$, so that $B_0=1$.

For $n \ge 1$, the Bell number $B_n$ counts the number of distinct partitions of the set $\{1, 2, \ldots, n\}$ into non empty parts, while $B_0=1$ is a convenient convention.
Classifying partitions by the elements in the block containing $n{+}1$ one sees that the Bell numbers satisfy the recurrence relation
$$B_{n+1}=\sum_{k=0}^{n} \binom{n}{k} B_k\, , \quad \mbox{for $n \ge0$}\,.$$

Actually, what Dobi\'nski shows in \cite{dob2} is that the numbers $D_n$ given by
$$
D_n=\sum_{k=0}^\infty \frac{k^n}{k!}\, , \quad \mbox{$n \ge 1$}
$$
satisfy\footnote{Dobi\'nski verifies  the recurrence relation only up to $n=7$, but once you know that it is true, it is easy to check its general validity by induction.} the recurrence relation
$$D_{n{+}1}=\sum_{k=0}^{n} \binom{n}{k} D_k\, , \quad \mbox{for $n \ge1$}\,,$$
and, thus, since $D_0=e$, we have that $D_n=eB_n$, for $n \ge 1$.

Further, in \cite{dob4}, Dobi\'nski shows that
$$
\sum_{n=0}^\infty \frac{D_n}{n!} z^n=e^{e^z}\,,$$
and thus that $e^{e^z-1}$ is the exponential generating function of the Bell numbers.

\medskip

No motivation for the consideration of the sums $D_n$, and no connection with combinatorics are presented in Dobi\'nski's paper. It  appears that  it was  Rota in \cite{rota} who firts pointed out the connection of the numbers $D_n$ with the Bell numbers, termed $(\star)$ Dobin\'ski formula and called attention to its relevance. There are Dobi\'nski type formulas in many different contexts, nowadays.

\

Let us turn now to the second of the formulas named after Dobi\'nski. In \cite{dob1}, and later on  in \cite{dob3}, Dobi\'nski\footnote{Dobi\'nski returns to the subject of obtaining formulas for some infinite products and sums of series by telescoping arguments in \cite{dob5} and \cite{dob6}, and presents the Euler-Vi\`ete formula for $\sin x/x$ as infinite product of cosines or the formula
$$\sum_{n=1}^\infty \arctan(2/n^2)=3\pi/4\,,$$ with credit to none other than Beltrami who proposed the formula as a \textit{quistione} in \textit{Giornale de Matematiche}, vol. 5, page 189, and to  Grunert, who  gave a proof in a note by the \textit{herausgeber}, in his \textit{Archiv der Mathematik und Physik}, vol 47, page 362. } proposes the following infinite product identity:
\begin{equation}\label{eq:dob identity1}
\prod_{j=0}^\infty \big(\tan 2^j \pi x\big)^{1/2^j}=(2 \sin \pi x)^2\, ,
\end{equation} which is obtained  by doubling the angle reiteratively in  the basic identity
$$
\tan \pi x=\frac{2 \sin^2 \pi x}{\sin 2 \pi x}\, .$$
Formula \eqref{eq:dob identity1} is purportedly valid for all $x\in \mathbb{R}\setminus \widetilde{\mathcal{Q}}$, where $\widetilde{\mathcal{Q}}$ is the set of dyadic rationals, i.e., real numbers $x$ of the form  $x= h/2^k$ for integer $k \ge 0$ and integer $h$.

For $x \notin \widetilde{\mathcal{Q}}$ none of the numbers $\pi x, 2\pi x, 4\pi x, \ldots$ is an odd multiple of $\pi/2$ and thus the $\tan$'s in the formula \eqref{eq:dob identity1} are well defined.

There is still the issue as to what is the proper definition of $(\tan 2^j \pi x)^{1/2^j}$ when $(\tan 2^j \pi x)$ is negative. We will not  elaborate upon this point, which is carefully dealt with in \cite{agnewwalker}, opt to bypass it and  consider instead the identity
\begin{equation}\label{eq:dob identity}
\prod_{j=0}^\infty \big|\tan 2^j \pi x\big|^{1/2^j}=(2 \sin \pi x)^2\, ,
\end{equation}
which we term \textit{Dobi\'nski identity}.

But,  alas,  as pointed out and discussed at length by   Agnew  and  Walker in \cite{agnewwalker}, the identity \eqref{eq:dob identity} is not valid for all $x \in \R\setminus \widetilde{\mathcal{Q}}$.

Let us restrict ourselves to $x$ in the interval $[0,1]$. For $x \in [0,1]$ and integer $n \ge 1$, one has
$$\prod_{j=0}^n \big|\tan 2^j \pi x\big|^{1/2^j}=\frac{2^{\sum_{j=0}^n 2^{-j}} \sin^2 \pi x}{|\sin 2^{n{+}1} \pi x|^{1/2^n}}\, .$$

As $n \to \infty$, the numerator of this expression tends to $4 \sin^2 \pi x$, but the denominator does not necessarily tend to 1.

\

Let us denote $\mathcal{Q}=\widetilde{\mathcal{Q}}\cap [0,1]$ and $\mathcal{I}\triangleq [0,1]\setminus \mathcal{Q}$. For $x \in \mathcal{I}$ write its (unique) binary expansion as
$$x = \sum_{k=1}^\infty \epsilon_j(x)/2^j\, , \quad \mbox{with $\epsilon_j(x)\in \{0,1\}$}\,, $$
For $x \in (0,1)\cap \mathcal{Q}$ we have two binary expansions, one with digit  1 at a certain position and then digit 0 afterwards and another one with 0 at a certain position and then digit 1 afterwards. For $x=0$ and $x=1$ the binary expansion is unique.

For each $x  \in \mathcal{I}$ and $n \ge 1$,  we define $z_n(x)$ as the length of the longest constant  string of digits $\epsilon_k$ after $\epsilon_n$, so that
$$\epsilon_{n{+}1}(x)=\dots=\epsilon_{n{+}z_n}(x),$$
but $\epsilon_{n{+}z_n}(x)\neq \epsilon_{n{+}z_n{+}1}(x)$.

For $x \in \mathcal{Q}$ we extend the definition of $z_n(x)$, which is now $z_n(x)=\infty$, for some $n$ onwards.

\medskip

Then, see the discussion in \cite{agnewwalker}, we have that
$$
\lim_{n \to \infty} |\sin 2^{n{+}1} \pi x|^{1/2^n}=1 \quad \mbox{if and only if} \quad \lim_{n \to \infty} \frac{z_n(x)}{2^n}=0\, .$$

Thus Dobi\'nski 's identity \eqref{eq:dob identity} does \emph{not} hold precisely for those  $x$ such that
$$\limsup_{n \to \infty} \frac{z_n(x)}{2^n}>0\,.$$

\medskip

Thus the exceptional set in Dobi\'nski's identity consists of those $x$ which have infinitely many exceedingly
 long (in terms of its starting position) strings of zeros or ones in its binary expansion.


We define now the Dobi\'nski set $\mathcal{D}$ as the exceptional set in Dobi\'nski's identity given by
$$\mathcal{D}=\Big\{x \in[0,1]: \limsup_{n \to \infty} \frac{z_n(x)}{2^n}> 0\Big\}\,$$
For $x \in \mathcal{Q}$, the $\limsup$ above is actually $\infty$.

\

\medskip

For $n \ge 1$, let us define  $$\mathcal{Q}_n=\Big\{\frac{k}{2^n}: 0 \le k \le 2^n\Big\}\, .$$
Thus $\mathcal{Q}_n\subset \mathcal{Q}_{n{+}1}$, for $n \ge 1$,  and $\bigcup_{n \ge 1} \mathcal{Q}_n=\mathcal{Q}$.

For $x\in\mathcal{I}$ and $n \ge 1$, let us denote
$$S_n(x)=\sum_{j=1}^n \epsilon_j(x)/2^j\, .$$
and by $P_n(x)$ the point in $\mathcal{Q}_n$ which is closest to $x$.
Observe that if $\epsilon_{n{+}1}(x)=0$, then $P_n(x)=S_n(x)<x$, while if $\epsilon_{n{+}1}(x)=1$, then $P_n(x)=S_n(x)+1/2^n>x$.

Besides, for $x \in \mathcal{I}$ and $n \ge 1$, one has that
$$\frac{1}{2^{n{+}z_n(x){+}1}}\le |x-P_n(x)|\le \frac{1}{2^{n{+}z_n(x)}}\, .$$

Thus the Dobi\'nski set is given by
$$\mathcal{D}=\bigg\{x \in [0,1]: \limsup\frac{-\ln_2 |x-P_n(x)|}{2^n}> 0\bigg\}\, .$$
For $x \in \mathcal{Q}$, the $\limsup$ above is actually $\infty$.
Points in the Dobi\'nski set have exceedingly good approximations by dyadic rationals.

\medskip

If we define $A_{n, k}$ for $k,n\ge 1$ as
$$A_{n, k}=\bigcup\limits_{y\in \mathcal{Q}_n}B(y,2^{- 2^n/k})\, ,$$ the Dobi\'nski set appears as
$$\mathcal{D}=\bigcup_{k\ge 1}\mathcal{D}(k) \quad \mbox{where} \quad \mathcal{D}(k)=\bigcap\limits_{j \ge 1}\bigcup\limits_{n \ge j} A_{n,k}\, ;$$
the points in the Dobi\'nski subset $\mathcal{D}(k)$ are those points which lie in infinitely of the $A_{n,k}$.

\medskip

Our goal in this note is to study the size of the Dobi\'nski set $\mathcal{D}$.

As for its Lebesgue measure, and as pointed out in \cite{agnewwalker}, one has that $|\mathcal{D}|=0$.
This is simply seen as follows. Since $|A_{n,k}| =2^n 2^{-2^n/k}$, the Borel-Cantelli lemma gives us comfortably that $|\mathcal{D}(k)|=0$, for each $k \ge 1$, and consequently that $|\mathcal{D}|=0$.

\

\begin{remark}Let $T:[0,1] \rightarrow [0,1]$ be the transformation $T(x)=2x \mod 1$. Denote the iterates of $T$ by $T^n, n \ge 1,$ and observe that  $T^n(x)=2^n(x-S_n(x))$, for $x \in [0,1]$.

Define $I_n=[0,2^{-2^n})$, for each $n \ge 1$ and let the set $\mathcal{T}$ consists of those $x \in [0,1]$ such that $T^n(x)\in I_n$ for infinitely many $n$.
The orbit under $T$ of a point in $\mathcal{T}$  hits infinitely many times  the rapidly shrinking target $I_n$.

If $x\in \mathcal{T}$, then $|x-S_n(x)|<2^{-n-2^n}$ infinitely many times, and thus, for infinitely many $n$ one has both that $z_n(x)>{2^n}$ and that $\epsilon_{n{+}1}(x)=0$. The points in $\mathcal{T}$ are those points in $\mathcal{D}(1)$ with long strings of zeros.

The points in $\mathcal{D}(1)$ with long strings of ones appear analogously by iterating $x \mapsto 2(1-x) \mod 1$.\end{remark}

\

\noindent \textit{Outline.} Section \ref{sect:MTP} describes the Mass Transference Principle of Beresnevich and Velani, together with some background on Hausdorff measures and dimensions. Section \ref{sect:dyadic approx and dobinski} is devoted to dyadic approximations and the study of the size of the Dobi\'nski set.  An alternative approach via Cantor-like sets, which we call willow sets,  to analyze the size of the Dobi\'nski set is presented in Section \ref{sect:Cantor}. Section \ref{sect:capacity} proposes a question about the logarithmic capacity of the Dobi\'nski set.

\

\noindent \textit{Acknowledgements.} We would like to thank Professors Alicia Cant\'on and Mar\'{\i}a Victoria Meli\'an for some fruitful conversations.\\
The first author was supported by  ERC grant 307179-GFTIPFD.\\
The third author is supported by Plan Nacional I+D grant no. MTM2017-85666-P, Spain. She  would also like to express her gratitude to the IMPAN for the invitation to participate in the Simons Semester. It was during this stay that the final version of this paper has been completed.

\section{ Hausdorff measure and the mass transference principle}\label{sect:MTP}

 \subsection{Hausdorff measures and dimensions} We begin this section by establishing the notation and recalling the basic definitions regarding Hausdorff measures. We refer the reader to \cite{falc} or \cite {matt} for  further background information on this  topic.

In what follows, a \emph{gauge function}  $h\colon[0, +\infty)\to[0, +\infty]$ is a right-continuous, increasing function such that $h(0)=0$. For a set $E\subset \mathbb{R}^m$, and $\delta>0$, consider  coverings of $E$ by sequences of balls $B(x_i,r_i)_{i \ge 1}$ with radii $r_i \leq\delta$, and denote
$$
\mathcal{H}_h^\delta(E)= \inf\bigg\{\sum_{i\ge1}h(r_i):~E \subset \bigcup_{i\ge 1} B(x_i,r_i)~ \textrm{and}~ r_i\leq \delta\bigg\}.
$$
	
The Hausdorff $h$-measure of $E$ denoted by $\mathcal{H}_h (E)$ is defined as
$$
\mathcal{H}_h (E)= \lim_{\delta\rightarrow 0}\mathcal{H}_h^\delta(E).
$$
When $h(r ) = r^s$, and $s>0$, we call $\mathcal{H}_h (E)$  the $s$-dimensional Hausdorff measure of $E$
and write $\mathcal{H}_h (E)=\mathcal{H}_s (E)$. The exponent $s=m$  corresponds to the Lebesgue measure in $\mathbb{R}^m$.

\noindent The case  $h(r)={1}/{(\log 1/x)^s}$, and $s>0$, will also be of special interest for us; the associated Hausdorff measure is denoted by $\mathcal{L}_s (E)$ and called the $s$-logarithmic dimensional Hausdorff measure.

The Hausdorff dimension of $E$ is
$$
\mbox{\rm dim}(E) = \inf\{s>0:\mathcal{H}_s (E)  = 0\}\, ;
$$
while the logarithmic Hausdorff dimension of $E$ is
$$\mathcal{L}{-}\mbox{\rm dim}(E) = \inf\{s>0:\mathcal{L}_s (E)  = 0\}.
$$

\subsection{The Mass Transference Principle (MTP)}

In the context of diophantine approximation, Beresnevich and Velani   study in \cite{BV} the Hausdorff measure of sets  which are defined as lim sup of a sequence of balls $(B_i)_{i \ge 1}$, that is
$$
\lim \sup_{i\rightarrow\infty} B_i \triangleq \bigcap_{j=1}^\infty \bigcup_{k\geq j} B_k
$$
Their  key result,  called the \textit{Mass Transference Principle}, \emph{MTP}, for short,  allows to infer information on $h$-Hausdorff measure from statements on Lebesgue measure.

We only consider the (basic) \emph{MTP} for dimension 1. For a ball  $B=B(x,r)$ in $\R$ and gauge function $h$   we denote by $B^h$ the ball $B(x,h(r))$.

\begin{theorem}[Beresnevich, Velani] \textbf{Mass Transference Principle.}
 Let $(B_i)_{i \ge 1}$ be a sequence of balls in $\mathbb{R}$ with
$\textrm{diam}(B_i)\rightarrow 0$ as $i \rightarrow \infty$.

Let $h$ be a gauge function such that $h(x)/x$ is non increasing, for all $x$ sufficiently close to $0$.

If for any interval $I \subset \R$ one has that
$$
|I \cap \limsup_{i\rightarrow \infty} B_i^h|= |I|\,,
$$
then for any interval $I \subset \R$ one has that
$$
\mathcal{H}_h \big(I \cap \limsup_{i\rightarrow \infty} B_i\big)= \mathcal{H}_h (I).
$$
\end{theorem}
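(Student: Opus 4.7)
The plan is to establish the non-trivial inequality $\mathcal{H}_h(I \cap \limsup B_i) \ge \mathcal{H}_h(I)$ by exhibiting, inside every subinterval of positive $\mathcal{H}_h$-measure, a Cantor-like subset $\mathbf{K}_\infty$ of $\limsup B_i$ that already carries a positive mass of a suitable Frostman measure. First I reduce to the genuinely delicate case $h(r)/r \to \infty$ as $r \to 0$: when $h(r)/r$ stays bounded, $\mathcal{H}_h$ is comparable to Lebesgue measure, the ratio of radii of $B_i^h$ and $B_i$ is bounded, and the conclusion follows directly from the hypothesis. I may also discard all balls not eventually contained in $I$, and by inner-regularity reduce to working inside a single bounded interval.

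The core construction builds a nested sequence of finite collections of disjoint balls $\mathcal{J}_0=\{I\}\supset \mathcal{J}_1\supset\cdots$ with $\mathbf{K}_\infty=\bigcap_k\bigcup \mathcal{J}_k$ contained in $I\cap \limsup B_i$. Given $J\in\mathcal{J}_k$, the hypothesis $|J\cap\limsup B_i^h|=|J|$ allows me to select from the tail of the family $(B_i)$ finitely many balls $B_{i_1},\ldots,B_{i_N}$ whose $h$-enlargements $B_{i_j}^h$ are pairwise disjoint, lie strictly inside $J$, and satisfy $\sum_j|B_{i_j}^h|\ge \theta|J|$ for a fixed absorption factor $\theta\in(0,1)$; the shrunken balls $B_{i_j}$ themselves form the children of $J$ in $\mathcal{J}_{k+1}$. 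By also forcing radii at stage $k{+}1$ to be much smaller than radii at stage $k$, one obtains the good separation of scales needed for the Frostman estimate below. Each point of $\mathbf{K}_\infty$ lies in a ball from the original family at every stage, hence in infinitely many $B_i$.

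I place a probability measure $\mu$ on $\mathbf{K}_\infty$ by distributing mass from $J\in\mathcal{J}_k$ to its children $B\in\mathcal{J}_{k+1}$ in proportion to $|B^h|$, i.e.\ $\mu(B)=\mu(J)\cdot |B^h|/\sum_{B'}|B'^h|$ with the sum over children of $J$. The target is a Frostman-type bound $\mu(B(x,r))\le C\,h(r)$ for all $x$ and small $r$; once available, the mass distribution principle (see Falconer) yields $\mathcal{H}_h(\mathbf{K}_\infty)\ge 1/C$, and performing the construction inside arbitrarily small subintervals of $I$ upgrades this to the full-measure conclusion $\mathcal{H}_h(I\cap \limsup B_i)=\mathcal{H}_h(I)$.

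The main obstacle is the Frostman estimate itself, which demands a case analysis on where $r$ sits relative to the scales $r_k$ of the Cantor levels. When $r$ is comparable to the radius of a stage-$k$ ball $J\ni x$, the estimate $\mu(J)\lesssim h(r(J))$ is built in by the proportional mass assignment, since $|B^h|=2h(r(B))$. When $r$ lies between two successive scales, one bounds the number of stage-$(k{+}1)$ children of $J$ that $B(x,r)$ can meet, and converts that counting bound into a bound in terms of $h(r)$ precisely by invoking the non-increasing property of $h(x)/x$. Engineering the scale separation, the absorption factor $\theta$, and the proportional mass assignment so that all these case estimates collapse into a single clean bound $C\,h(r)$ is the entire technical heart of the Mass Transference Principle, and it is here that the hypothesis on $h$ is used in an essential way.
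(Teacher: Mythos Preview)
The paper does not supply a proof of this theorem at all: the Mass Transference Principle is quoted as a known result of Beresnevich and Velani, with a reference to \cite{BV}, and is then used as a black box (via Corollary~\ref{cor MTP}) in the proof of Theorem~\ref{main theorem}. There is therefore no in-paper proof to compare your proposal against.

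For what it is worth, your outline is the standard Beresnevich--Velani strategy: reduce to the case $h(r)/r\to\infty$, use the full-Lebesgue-measure hypothesis to run a Cantor construction inside any given interval by repeatedly selecting well-separated balls whose $h$-enlargements absorb a fixed proportion of the parent's length, distribute mass proportionally to $|B^h|$, and verify a Frostman bound $\mu(B(x,r))\lesssim h(r)$ via a scale-by-scale case analysis exploiting the monotonicity of $h(x)/x$. The one ingredient you leave implicit is the covering/selection lemma (often called the $K_{G,B}$ lemma in \cite{BV}) that guarantees, from the full-measure hypothesis on $\limsup B_i^h$, a \emph{disjoint} finite subcollection of $h$-enlargements capturing a fixed proportion $\theta|J|$ of the parent; this is not entirely trivial and is where most of the work in \cite{BV} lies. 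But as a high-level plan your sketch is correct and matches the original source the paper cites.
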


\medskip

We shall appeal later on to the following local version of \emph{MTP}.
\begin{corollary} \label{cor MTP}Let $(B_i)_{i \ge 1}$ be a sequence of balls in $\mathbb{R}$ centered at points in the interval $[0,1]$ with
$|B_i|\rightarrow 0$ as $i \rightarrow \infty$.

Let $h$ be a gauge function such that $h(x)/x$ is non increasing, for all $x$ sufficiently close to $0$.

If
$$
(\star) \qquad \big|\limsup_{i\rightarrow \infty} B_i^h\big|= 1\,,
$$
then
$$
\mathcal{H}_h (\limsup_{i\rightarrow \infty} B_i)= \mathcal{H}_h ([0,1]).
$$
\end{corollary}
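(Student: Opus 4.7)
The plan is to deduce the corollary from the (global) Mass Transference Principle via an augmentation argument that lifts the local hypothesis $(\star)$ to one valid on all of $\R$.

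\textbf{Setup.} Since $h$ is right-continuous at $0$ with $h(0)=0$ and $r_i\to 0$, we have $h(r_i)\to 0$. Because each $B_i$ is centred in $[0,1]$, any accumulation point of those centres lies in $[0,1]$, so $\limsup_i B_i^h\subset [0,1]$ and, in particular, $\limsup_i B_i\subset [0,1]$. Hypothesis $(\star)$ then says that $[0,1]\setminus\limsup_i B_i^h$ has Lebesgue measure zero.

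\textbf{Augmentation.} I would next enumerate the countable family $\{B_i+n:\,i\ge 1,\,n\in\mathbb Z\}$ as a single sequence $(\widetilde B_k)_{k\ge 1}$, using a diagonal (Cantor-pairing) order so that $\operatorname{diam}(\widetilde B_k)\to 0$. Since $h(r_i)\to 0$, for any fixed point $y=n_0+t$ with $t\in(0,1)$ only the translates with $n=n_0$ can, eventually, contain $y$; and for this fixed $n_0$ the enumeration still visits every index $i\in\mathbb Z_{\ge 1}$. Hence $y\in\limsup_k\widetilde B_k^h$ iff $t\in\limsup_i B_i^h$. Combined with the Setup step, this gives $|I\cap\limsup_k\widetilde B_k^h|=|I|$ for \emph{every} interval $I\subset\R$, which is exactly the hypothesis of the MTP.

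\textbf{Application and transfer back.} Applying the MTP to $(\widetilde B_k)$ with the test interval $[0,1]$ yields
$$
\mathcal H_h\bigl([0,1]\cap\limsup_k\widetilde B_k\bigr)=\mathcal H_h([0,1]).
$$
The same translate-irrelevance argument, now applied to the unscaled balls, shows that $\limsup_k\widetilde B_k$ agrees with $\limsup_i B_i$ on $(0,1)$, while the two endpoints $\{0,1\}$ carry $\mathcal H_h$-measure zero (since $h(\delta)\to 0$). Together with $\limsup_i B_i\subset[0,1]$ this gives $\mathcal H_h(\limsup_i B_i)=\mathcal H_h([0,1])$, as desired.

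The main obstacle is really only the bookkeeping of the augmentation step: the diagonal order must drive $\operatorname{diam}(\widetilde B_k)$ to $0$ while, for each $n$, visiting all $i$'s cofinally, so that the translate-irrelevance identity produces the full $\limsup_i B_i^h$ (and not merely a thinned subsequence of it). A standard Cantor-pairing enumeration achieves both; the remaining details are routine.
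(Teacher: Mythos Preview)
Your proposal is correct and follows essentially the same route as the paper: extend periodically by integer translates to upgrade the local hypothesis $(\star)$ to a global one, apply the Mass Transference Principle, and restrict back to $[0,1]$. The paper phrases the extension as $\mathcal{B}^h=\{x\in\R:x\bmod 1\in\limsup_i B_i^h\}$ and simply remarks that this set is again a $\limsup$ of balls, whereas you make the enumeration of $\{B_i+n\}$ and the endpoint bookkeeping explicit; the underlying argument is the same.
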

\begin{proof} Since $|B_i|\rightarrow 0$ as $i \rightarrow \infty$, $h$ is right continuous and $h(0)=0$, we have that
$\limsup_{i\rightarrow \infty} B_i\subset[0,1]$ and $\limsup_{i\rightarrow \infty} B^h_i\subset [0,1]$.

Let $\mathcal{B}^h$ be the set $\mathcal{B}^h=\{x \in \R: x \mod 1 \in \limsup_{i\rightarrow \infty} B_i^h\}$ and analogously denote by $\mathcal{B}$ the periodic extension of $\limsup_{i\rightarrow \infty} B_i$.
Observe that $\mathcal{B}^h$ and $\mathcal{B}$ are both $\limsup$ of balls.

Condition $(\star)$ gives us that $|\mathcal{B}^h \cap J|=|J|$, for each interval $J \subset \R$.

The \emph{MTP} gives us then $\mathcal{H}_h(\mathcal{B} \cap J)=\mathcal{H}_h(J)$, for each interval $J \subset \mathbb{R}$, and, in particular,
$$\mathcal{H}_h(\limsup_{i\to \infty} B_i)=\mathcal{H}_h([0,1])\, .$$

\end{proof}

A fundamental result in diophantine approximation is Dirichlet's theorem, which says that for
any irrational $x\in [0,1]$ there are infinitely many rational numbers $p/q$ satisfying $|x- p/q| < 1/q^2$.
\noindent For a general approximating function $\psi : \mathbb{N} \rightarrow \mathbb{R}^+$, we  define the set of
 $\psi$-approximable numbers as
$$
\mathcal{A}(\psi)=\left\{x\in[0,1];~|x- p/q| < \frac{\psi(q)}{q}~ \textrm{for infinitely many rational numbers}~p/q\right\}.
$$
In particular, observe that  $\mathcal{A}(\psi)$ is a set which can be expressed as a $\limsup$ of balls.

\
		
The following theorem by Khintchine (\cite{Kin}, 1924), provides an elegant result establishing a criterion to determine the Lebesgue measure of the set $\mathcal{A}(\psi)$:

\begin{theorem}[Khintchine]
\begin{equation*}
|\mathcal{A}(\psi)| =
\begin{cases}
0, & \textrm{if }\displaystyle\sum_{q=0}^\infty \psi(q)<\infty,\\
\\
1, & \textrm{if } \displaystyle\sum_{q=0}^\infty \psi(q)=\infty,~ \textrm{and} ~\psi ~\textrm{is monotonic}.
\end{cases}
\end{equation*}
\end{theorem}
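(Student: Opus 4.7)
The plan is to treat the convergence and divergence halves separately. The convergence half is a direct application of Borel--Cantelli; the divergence half requires a quantitative Borel--Cantelli together with a pairwise quasi-independence estimate, and it is here that the monotonicity of $\psi$ is essential.

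For the convergent half, assume $\sum_{q\ge 1}\psi(q)<\infty$. For each $q\ge 1$ set
$$
A_q = \bigcup_{p=0}^{q} B\bigl(p/q,\psi(q)/q\bigr),
$$
so that $\mathcal{A}(\psi)\subseteq\limsup_q A_q$. A trivial union bound gives $|A_q|\le 2(q+1)\psi(q)/q\le 4\psi(q)$, whence $\sum_q|A_q|<\infty$; the easy half of Borel--Cantelli then yields $|\mathcal{A}(\psi)|=0$.

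For the divergent half, assume $\sum_q \psi(q)=\infty$ and $\psi$ monotonic. I would apply the divergence form of Borel--Cantelli: if measurable sets $(A_n)_{n\ge 1}$ satisfy $\sum_n|A_n|=\infty$ together with a quasi-independence bound
$$
\limsup_{N\to\infty}\frac{\bigl(\sum_{n=1}^N |A_n|\bigr)^2}{\sum_{m,n=1}^N |A_m\cap A_n|}\ge c>0,
$$
then $|\limsup_n A_n|\ge c$. With $A_q$ as above, $\sum|A_q|=\infty$ follows from $|A_q|\asymp\psi(q)$, which uses that distinct $p/q$ are separated by $\ge 1/q$, so the balls making up $A_q$ have little overlap (we may assume $\psi\le 1/4$). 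The crucial remaining step is the pairwise estimate $|A_q\cap A_{q'}|\le C\,|A_q|\,|A_{q'}|$ for $q\neq q'$, which rests on the Farey-type separation $|p/q-p'/q'|\ge 1/(qq')$; monotonicity of $\psi$ enters precisely to keep $\sum_{q,q'\le N}|A_q\cap A_{q'}|$ comparable to $\bigl(\sum_{q\le N}|A_q|\bigr)^2$, preventing pathological concentration. Granting these, one obtains $|\mathcal{A}(\psi)|\ge c>0$, and a zero-one upgrade via the invariance of $\mathcal{A}(\psi)$ under a dense set of rational translations, combined with ergodicity, promotes this to $|\mathcal{A}(\psi)|=1$.

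The heart of the difficulty is the pairwise intersection bound: counting how many pairs $(p,p')$ with $0\le p\le q$ and $0\le p'\le q'$ produce overlapping approximation intervals requires a Farey-style combinatorial estimate, and it is in summing these counts that monotonicity is used. Without it, the mass of $\psi$ could concentrate on highly correlated denominators (the Duffin--Schaeffer phenomenon), destroying quasi-independence and with it the divergent Borel--Cantelli lower bound.
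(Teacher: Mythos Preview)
The paper does not prove Khintchine's theorem at all: it is quoted as a classical background result with a citation to \cite{Kin} (and to the survey \cite{BB}), and is used only as input for the Mass Transference Principle discussion. There is therefore no proof in the paper to compare your proposal against.

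That said, your outline is the standard one and is essentially sound, but one step as you phrased it would not go through verbatim. The uniform pairwise bound $|A_q\cap A_{q'}|\le C\,|A_q|\,|A_{q'}|$ for $q\neq q'$ is false when you take \emph{all} fractions $p/q$ rather than reduced ones: if $q'=2q$, every center of $A_q$ is also a center of $A_{q'}$, so the Farey separation $|p/q-p'/q'|\ge 1/(qq')$ you invoke does not apply to those coincident centers, and $|A_q\cap A_{q'}|$ is of order $\psi(q')$ rather than $\psi(q)\psi(q')$. The usual remedies are either to restrict to coprime pairs $(p,q)$ and appeal to the fact (for monotone $\psi$) that $\sum_q \phi(q)\psi(q)/q$ diverges exactly when $\sum_q \psi(q)$ does, or to prove an overlap estimate of the shape $|A_q\cap A_{q'}|\lesssim \psi(q)\psi(q')+\psi(q')\gcd(q,q')/q'$ and then use monotonicity to show the extra error terms sum to something dominated by $\big(\sum_{q\le N}\psi(q)\big)^2$. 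Your sentence ``monotonicity of $\psi$ enters precisely to keep $\sum_{q,q'\le N}|A_q\cap A_{q'}|$ comparable to $(\sum_{q\le N}|A_q|)^2$'' is pointing in the right direction, but the mechanism is this error-term accounting, not a clean bound on each individual $|A_q\cap A_{q'}|$.
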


In particular, when $\psi(q)=1/(q \ln q)$,  we deduce from Khintchine's theorem that $|\mathcal{A}(\psi)|=0$.

For $\psi(q)=q^{-\alpha} (\alpha >1)$,  one has $|\mathcal{A}(\psi)|=0$ and  Besicovitch (\cite{besi}, 1934)  proved  that the Hausdorff dimension of $\mathcal{A}(\psi)$ is $2/(1+\alpha)$. Independently, this result was also proved, and in fact improved, by Jarn\'ik (\cite {jar}, 1931) in the following beautiful theorem:

\begin{theorem}[Jarn\'{i}k]\label{jarnik}
 Let $h$ be a gauge function such that $h(r)/r\rightarrow \infty$ as $r\rightarrow 0$
and $h(r)/r$ is decreasing for all $r$ sufficiently close to $0$. Then
\begin{equation*}
\mathcal{H}_h (\mathcal{A}(\psi)) =
\begin{cases}
0, & \textrm{if }\sum\limits_{q=0}^\infty q ~h\left(\dfrac{\psi(q)}{q}\right)<\infty\\
\\
\infty, & \textrm{if } \sum\limits_{q=0}^\infty q ~h\left(\dfrac{\psi(q)}{q}\right)=\infty,~ \textrm{and} ~\psi ~\textrm{is monotonic}
\end{cases}
\end{equation*}
\end{theorem}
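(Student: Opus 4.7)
The theorem naturally splits into two halves: $\mathcal{H}_h(\mathcal{A}(\psi)) = 0$ in the convergent case and $\mathcal{H}_h(\mathcal{A}(\psi)) = \infty$ in the divergent case. Setting $B_{p,q} = B(p/q,\psi(q)/q)$, we have
$$\mathcal{A}(\psi) \;=\; \limsup_{q\to\infty} \bigcup_{p=0}^q B_{p,q},$$
so $\mathcal{A}(\psi)$ is a $\limsup$ of a countable family of balls---precisely the setting of the Mass Transference Principle.

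For the convergent half I would argue directly. Convergence of $\sum q\,h(\psi(q)/q)$ forces $h(\psi(q)/q) \to 0$ and hence, by right-continuity of $h$ at the origin, $\psi(q)/q \to 0$. Given $\delta > 0$, for $N$ large enough the balls $\{B_{p,q}\}_{q \ge N,\,0\le p\le q}$ have radii $\le \delta$ and still cover $\mathcal{A}(\psi)$, so the Hausdorff--Cantelli estimate
$$\mathcal{H}_h^\delta(\mathcal{A}(\psi)) \;\le\; \sum_{q \ge N}(q+1)\,h(\psi(q)/q) \longrightarrow 0 \quad \text{as } N\to\infty$$
yields $\mathcal{H}_h(\mathcal{A}(\psi)) = 0$.

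For the divergent half the plan is to apply Corollary~\ref{cor MTP} to $(B_{p,q})$ with gauge $h$. The $h$-dilated balls are $B_{p,q}^h = B(p/q,\,h(\psi(q)/q))$, and
$$\limsup B_{p,q}^h \;=\; \mathcal{A}(\tilde\psi), \qquad \tilde\psi(q) := q\,h(\psi(q)/q),$$
so the hypothesis $\sum q\,h(\psi(q)/q) = \infty$ becomes $\sum \tilde\psi(q) = \infty$. Khintchine's theorem applied to $\tilde\psi$ then gives $|\mathcal{A}(\tilde\psi)| = 1$, and Corollary~\ref{cor MTP} delivers $\mathcal{H}_h(\mathcal{A}(\psi)) = \mathcal{H}_h([0,1])$. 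To identify the right-hand side with $\infty$, I would use that $h(r)/r$ is non-increasing near $0$ and $h(r)/r \to \infty$: any cover of $[0,1]$ by balls of radii $r_i \le \delta$ satisfies
$$\sum_i h(r_i) \;=\; \sum_i r_i \cdot \frac{h(r_i)}{r_i} \;\ge\; \frac{h(\delta)}{\delta}\sum_i r_i \;\ge\; \frac{1}{2}\cdot\frac{h(\delta)}{\delta} \longrightarrow \infty \text{ as } \delta \to 0.$$

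The main obstacle is supplying Khintchine's theorem with a monotonic $\tilde\psi$: although $\psi$ is monotonic and $h(r)/r$ is non-increasing, writing $r_q = \psi(q)/q$ one finds
$$\frac{\tilde\psi(q+1)}{\tilde\psi(q)} = \frac{q+1}{q} \cdot \frac{h(r_{q+1})}{h(r_q)} \in \left[\,\frac{\psi(q+1)}{\psi(q)},\; \frac{q+1}{q}\,\right],$$
so the prefactor $q$ can in principle disrupt monotonicity of $\tilde\psi$. The standard remedy is either to extract a monotonic subsequence $(q_k)$ along which $\sum_k \tilde\psi(q_k) = \infty$ persists, or to bypass monotonicity by first showing $|\mathcal{A}(\tilde\psi)| > 0$ via a quasi-independence / divergent-Borel--Cantelli argument and upgrading to full measure with a Cassels--Gallagher zero-one law. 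Once the full-measure input is secured, the rest is routine assembly of the tools from Section~\ref{sect:MTP}.
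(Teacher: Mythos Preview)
The paper does not actually prove Jarn\'{i}k's theorem; it is stated as a classical result with a reference to \cite{jar}, and the only argument offered is the one-line remark following the statement that ``the divergence part in Jarn\'{i}k's theorem, which is the difficult one, could now be easily inferred from Khintchine's theorem via the \emph{MTP}.'' Your proposal is precisely this sketch fleshed out: a direct Hausdorff--Cantelli covering bound for the convergent half, and Khintchine $+$ Corollary~\ref{cor MTP} for the divergent half, together with the easy verification that $\mathcal{H}_h([0,1])=\infty$ when $h(r)/r\to\infty$. You also correctly isolate the one genuine subtlety the paper glosses over---the monotonicity of $\tilde\psi(q)=q\,h(\psi(q)/q)$ needed to invoke Khintchine---and your suggested fixes (pass to a subsequence, or use quasi-independence plus a zero--one law) are the standard ones; the paper simply defers this point to the literature by noting that ``the original statements of Khintchine and Jarn\'{i}k included some extra assumptions on $\psi$, which ever since have been removed'' and citing \cite{BB}.
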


Note that, if $\psi(q) = q^{-\alpha} ~(\alpha > 1)$, then by choosing $h(r)= r^s, 0<s<1$, not only does Jarn\'ik's theorem implies that
$\mbox{\rm dim}(\mathcal{A}(\psi)) = 2/(1+\alpha)$, but also that its Hausdorff measure $\mathcal{H}_{2/(1+\alpha)}(\mathcal{A}(\psi))=\infty$.

 Notice as well that if we ask for a very high degree of approximation, such as $\psi(q)=2^{-q^\alpha}, \alpha>0$,  an instance of so-called super-Liouville numbers, then by choosing $h(r)=1/(\log 1/r)^s, s>0$, we get that $\mathcal{L}{-}\mbox{\rm dim}(\mathcal{A}(\psi))=2/\alpha$, and its corresponding logarithmic Hausdorff measure $\mathcal{L}_{2/\alpha}(\mathcal{A}(\psi))=\infty$.

Since $\mathcal{A}(\psi)$ is a $\lim \sup$ of balls,  the divergence part in Jarn\'{i}k's theorem, which is the difficult one, could  now be easily inferred from Khintchine's theorem via the \emph{MTP}.

Actually, the original statements of Khintchine and Jarn\'{i}k included some extra assumptions on $\psi$, which  ever since have been removed. See for instance \cite {BB} for an excellent survey on this subject.

\section{Dyadic approximation and Dobi\'nski set}\label{sect:dyadic approx and dobinski}

 A  program similar to that sketched above in Section \ref{sect:MTP} could be carried out in order to study the size of sets of numbers well approximable via dyadic fractions, if a dyadic version of  Kintchine's theorem would hold true. We then could apply the \emph{MTP} theorem, as in the diophantine approximation, to extract information on $h$-Hausdorff dimension from the statement on Lebesgue measure.

A natural approach, see, for instance,  \cite{S}, starts with applying the following version of the Borel-Cantelli lemma, which is given in terms of a generalization of the notion of independence in probability. If $(\Omega, \mathcal{A}, \mu)$ is a probability space, we will say that a sequence of events $(E_n)_{n\ge 1}$ in $\mathcal{A}$ is {\bf quasi-independent} if  there exists a constant $C>0$ such that
\[
\mu (E_n\cap E_m)\leq C \mu(E_n) \mu (E_m),
\]
 for all $n,m \ge 1$ such that $n\neq m$.
\begin{lemma}\label{lema:Borel Cantelli quasi} Let $(\Omega,\mathcal{A}, \mu)$ be a probability space, and let  $(E_n)_{n\ge 1}$, be a sequence of events. Let
$$E_\infty= \lim\sup_{n\rightarrow\infty} E_n=\bigcap\limits_{n \ge 1}\bigcup_{k\ge n} E_k\,. $$
Suppose that
$$
\sum_{n=1}^\infty \mu(E_n) = \infty.
$$
 If the sets $E_n$ are quasi-independent, then $\mu (E_\infty)>0$.
\end{lemma}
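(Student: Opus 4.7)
The plan is to run a second moment (Paley--Zygmund style) argument on the partial counting functions $S_N=\sum_{n=1}^N \mathbf{1}_{E_n}$, exploiting the quasi-independence hypothesis to control $\mathbb{E}[S_N^2]$, and then to transfer the resulting lower bound from $\{S_N>0\}$ to $E_\infty$ by a tail argument.

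First I would compute the first and second moments of $S_N$. One has
$$\mathbb{E}[S_N]=\sum_{n=1}^N\mu(E_n)\longrightarrow\infty$$
by hypothesis, and, splitting diagonal and off-diagonal contributions,
$$\mathbb{E}[S_N^2]=\sum_{n=1}^N\mu(E_n)+\sum_{\substack{n,m=1\\ n\neq m}}^N\mu(E_n\cap E_m)\le \mathbb{E}[S_N]+C\,\mathbb{E}[S_N]^2,$$
where the inequality is precisely the quasi-independence assumption.

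Next, the Cauchy--Schwarz inequality (applied to $\mathbf{1}_{\{S_N>0\}}\cdot S_N$) gives the Paley--Zygmund lower bound
$$\mu\bigl(\textstyle\bigcup_{n=1}^N E_n\bigr)=\mu\bigl(S_N>0\bigr)\ge \frac{\mathbb{E}[S_N]^2}{\mathbb{E}[S_N^2]}\ge \frac{\mathbb{E}[S_N]^2}{\mathbb{E}[S_N]+C\,\mathbb{E}[S_N]^2}.$$
Since $\mathbb{E}[S_N]\to\infty$, letting $N\to\infty$ yields $\mu\bigl(\bigcup_{n\ge 1}E_n\bigr)\ge 1/C$.

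The final step is to boost this from the full union to $E_\infty=\bigcap_{k}\bigcup_{n\ge k}E_n$. Fix $k\ge 1$ and apply the preceding argument to the shifted sequence $(E_n)_{n\ge k}$: it still has divergent $\mu$-sum (removing finitely many terms does not change divergence) and it is still quasi-independent with the same constant $C$, so
$$\mu\Bigl(\bigcup_{n\ge k}E_n\Bigr)\ge \frac{1}{C}.$$
The sets $U_k=\bigcup_{n\ge k}E_n$ are nested decreasing with intersection $E_\infty$, so by continuity of $\mu$ from above,
$$\mu(E_\infty)=\lim_{k\to\infty}\mu(U_k)\ge \frac{1}{C}>0,$$
which is the desired conclusion.

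The only delicate point is the passage from ``some $E_n$ occurs'' to ``infinitely many $E_n$ occur'', and this is exactly what the tail-invariance of the hypothesis makes painless; everything else is bookkeeping around the second moment estimate.
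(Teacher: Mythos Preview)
Your argument is correct and is essentially the classical Chung--Erd\H{o}s / Kochen--Stone second moment proof. Note that the paper does not actually prove this lemma: it simply attributes the result to \cite{Chung-Erdos} and \cite{Kochen-Stone} and points to the exposition in \cite{S}, all of which proceed along the same Paley--Zygmund / Cauchy--Schwarz line you have written out, so your proof is precisely the one the paper has in mind.
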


Although proofs of this result, which originated with  Chung and Erd\H{o}s, \cite{Chung-Erdos}, and Kochen and Stone, \cite{Kochen-Stone},  can be found in many probability text books, let us mention the one in \cite{S}, where a variant  of Kintchine's theorem is also proved.

In the dyadic context, we want to estimate the size of sets of points which are well approximated by dyadic rationals.

Let $\varphi$ be a function defined on $\mathbb{N}$ such that $0<\varphi(n)<c_0$, for each $n \ge 1$ and  some constant $c_0>0$. Associated to $\varphi$ we define the set
$$
\mathcal{B}(\varphi)=\big\{x\in[0, 1]; ~ |x-P_n(x)|<\frac{\varphi(n)}{2^n},\quad\mbox{for infinitely many $n \ge 1$}\Big\}\,.
$$
Recall that $P_n(x)$ is the closest dyadic number in $\mathcal{Q}_n$ to $x$, and observe that the case $\varphi(n)=2^n 2^{-2^n/k}$ corresponds to the subset  $\mathcal{D}(k)$ of the Dobi\'nski set $\mathcal{D}$.

\medskip

\begin{theo}\label{main theorem}Let $h$ be a gauge function in $[0,1]$ such that $h(x)/x$ is not increasing. Let $\varphi: \mathbb{N}\rightarrow (0,\infty)$ so that
\begin{equation}\label{eq:boundedness conditions}\sup\limits_{n \ge 1} \varphi(n)<+\infty \quad \mbox{and} \quad \sup\limits_{n\ge 1} 2^n h\Big(\dfrac{\varphi(n)}{2^n}\Big)< +\infty\, .\end{equation}

  Then
\begin{equation*}
\mathcal{H}_h\big(\mathcal{B}(\varphi)\big) =
\begin{cases}
0, & \textrm{if }\sum_{n=0}^\infty 2^n ~h\left(\dfrac{\varphi(n)}{2^n}\right)<\infty,\\
\\
\mathcal{H}_h ([0,1]), & \textrm{if } \sum_{q=0}^\infty 2^n ~h\left(\dfrac{\varphi(n)}{2^n}\right)=\infty.
\end{cases}
\end{equation*}

\end{theo}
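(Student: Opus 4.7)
\noindent \textit{Plan of proof.}
For the convergent case, the inclusion $\mathcal{B}(\varphi) \subset \bigcup_{n\ge N}\bigcup_{y\in\mathcal{Q}_n} B(y, \varphi(n)/2^n)$, valid for every $N \ge 1$, is a cover whose radii shrink uniformly in $n$ by the first boundedness hypothesis, so the definition of $\mathcal{H}_h$ gives
$$\mathcal{H}_h(\mathcal{B}(\varphi)) \le \lim_{N \to \infty} \sum_{n \ge N}(2^n+1)\,h(\varphi(n)/2^n) = 0.$$

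For the divergent case, the plan is to apply the local MTP (Corollary \ref{cor MTP}) to the countable family of balls $\{B(y,\varphi(n)/2^n): y \in \mathcal{Q}_n,\ n \ge 1\}$. After the $h$-blow-up, setting $s_n := h(\varphi(n)/2^n)$, the conclusion $\mathcal{H}_h(\mathcal{B}(\varphi)) = \mathcal{H}_h([0,1])$ reduces to the Lebesgue assertion $|\limsup_n A_n| = 1$ with $A_n := \{x \in [0,1] : |x - P_n(x)| < s_n\}$, i.e.\ a dyadic Khintchine-type theorem. Using the binary expansion one has $|x - P_n(x)| = \sum_{k\ge n+2} e_k/2^k$, where the $e_k$ are i.i.d.\ Bernoulli$(1/2)$ obtained from $\epsilon_{n+1}(x), \epsilon_{n+2}(x), \ldots$ by a (conditional) reflection, so $|x - P_n(x)|$ is uniformly distributed on $[0, 1/2^{n+1}]$ and $|A_n| = \min(1, 2^{n+1} s_n)$; the second boundedness hypothesis together with the divergence assumption then give $\sum |A_n| = \infty$.

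To promote $\sum |A_n| = \infty$ to $|\limsup_n A_n| = 1$ I would invoke Lemma \ref{lema:Borel Cantelli quasi}. For $n < m$, conditioning on $\epsilon_1, \ldots, \epsilon_{m+1}$ writes $|x - P_n(x)| = A_* + B$, with $A_*$ measurable with respect to the conditioning and $B$ uniform on $[0, 1/2^{m+1}]$, while $|x - P_m(x)| \in \{B, 1/2^{m+1} - B\}$; a short integration yields
$$|A_n \cap A_m| \le |A_n|\,|A_m| + C \cdot 2^{-(m-n)}\,|A_m|.$$
The correction term is summable in $m-n$ for each fixed $n$, so the Chung--Erd\H{o}s second-moment argument behind Lemma \ref{lema:Borel Cantelli quasi} still delivers $|\limsup_n A_n| > 0$. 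Finally, since each $A_n$ is measurable with respect to $\sigma(\epsilon_{n+1}, \epsilon_{n+2}, \ldots)$, the set $\limsup_n A_n$ is a tail event of the i.i.d.\ Bernoulli sequence $(\epsilon_k)$, and Kolmogorov's $0$-$1$ law promotes positive measure to full measure. The main technical obstacle is precisely this quasi-independence estimate: since $A_n$ and $A_m$ share a common Bernoulli tail, genuine independence fails, and the additive correction $2^{-(m-n)}|A_m|$ must be absorbed via the second-moment argument rather than via a clean constant-factor quasi-independence.
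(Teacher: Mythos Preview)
Your overall architecture matches the paper's exactly: the convergent case is the obvious covering estimate, and for the divergent case you reduce via Corollary~\ref{cor MTP} to a Lebesgue statement about $\mathcal{B}(\theta)$ with $\theta(n)=2^n h(\varphi(n)/2^n)$, then combine a Borel--Cantelli argument with Kolmogorov's $0$--$1$ law.

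The only substantive difference is how you handle the quasi-independence of the events $A_n=\{x:|x-P_n(x)|<s_n\}$. You condition on $\epsilon_1,\dots,\epsilon_{m+1}$, obtain an estimate of the form $|A_n\cap A_m|\le |A_n|\,|A_m|+C\,2^{-(m-n)}|A_m|$, and then invoke the full Chung--Erd\H{o}s second-moment inequality to absorb the correction. The paper instead observes that $A_n$ is nothing but a union $U_n$ of $2^n{+}1$ intervals of radius $\theta(n)/2^n$ centred at the points of $\mathcal{Q}_n$, and uses the elementary fact that any interval $I$ with $|I|>2^{-m}$ contains at most $2^{m+1}|I|$ points of $\mathcal{Q}_m$. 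Summing over the constituent intervals of $U_n$ gives directly
\[
|U_n\cap U_m|\ \lesssim\ \theta(n)\,\theta(m)\ \lesssim\ |U_n|\,|U_m|,
\]
i.e.\ \emph{clean} constant-factor quasi-independence, so Lemma~\ref{lema:Borel Cantelli quasi} applies as stated without any second-moment refinement. Your concern that ``genuine independence fails'' is of course true, but the geometric equidistribution of dyadic points already delivers the multiplicative bound you thought was unavailable; the probabilistic conditioning and the additive correction term are unnecessary detours.
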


The proof of the statement about $\mathcal{H}_h(\mathcal{B}_\varphi)=0$ is immediate and holds without the boundedness provisions \eqref{eq:boundedness conditions}.

We precede the proof with the following lemma.

\begin{lem}\label{lemma:preceeding main}
Let $\omega: \mathbb{N}\rightarrow (0,c_0)$ for some $c_0>0$.
Assume that
$$
\sum_{n \ge 1} \omega(n)=+\infty\,.$$
Then
$$|\mathcal{B}(\omega)|=1\,. $$
\end{lem}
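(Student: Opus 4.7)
The plan is to rewrite the events $E_n := \{x : |x - P_n(x)| < \omega(n)/2^n\}$ in terms of the doubling map $T(x) = 2x \bmod 1$, estimate pairwise intersections $|E_n \cap E_m|$, apply a second-moment (Paley--Zygmund / Chung--Erd\H{o}s) argument to get $|\mathcal{B}(\omega)| > 0$, and then upgrade to full measure by a zero-one law.

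Since $|x - P_n(x)| = 2^{-n}\min(T^n(x),\, 1 - T^n(x))$, one has $E_n = T^{-n}(A_n)$ with $A_n := [0,\omega(n)) \cup (1-\omega(n),1]$. Measure preservation of $T$ gives $|E_n| = 2\omega(n)$ when $\omega(n) < 1/2$ (and $E_n = [0,1]$ in the trivial case $\omega(n) \ge 1/2$), so the divergence hypothesis becomes $\sum_n|E_n| = \infty$. For $n < m$, the factorization $T^m = T^{m-n}\circ T^n$ gives $|E_n \cap E_m| = |A_n \cap T^{-(m-n)}(A_m)|$. The preimage $T^{-(m-n)}(A_m)$ is $2^{-(m-n)}$-periodic with density $2\omega(m)$, and intersecting with the two intervals making up $A_n$, after accounting for at most four partial-period boundary pieces, yields
\[
|E_n \cap E_m| \;\le\; |E_n|\,|E_m| + 4\,\omega(m)/2^{m-n}.
\]

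Crucially, the excess over the independent product is globally summable: $\sum_{1\le n<m\le N} \omega(m)/2^{m-n} \le \sum_{m\le N}\omega(m)$. Writing $S_N := \sum_{n\le N}\mathbf{1}_{E_n}$ and $\mu_N := \int S_N = \sum_{n\le N}|E_n|$, this gives $\int S_N^2 \le \mu_N^2 + C\,\mu_N$ for some absolute $C$, and Paley--Zygmund yields
\[
\bigl|\{S_N \ge \mu_N/2\}\bigr| \;\ge\; \frac{1}{4}\cdot\frac{\mu_N^2}{\int S_N^2} \;\longrightarrow\; \frac{1}{4}
\]
as $N\to\infty$. Since $\{S_N \ge K\}$ is increasing in $N$ for each fixed $K$ and $\mu_N \to \infty$, first letting $N\to\infty$ and then $K\to\infty$ produces $|\mathcal{B}(\omega)| = |\{S_\infty = \infty\}| \ge 1/4$.

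Finally, each $E_n$ lies in $\sigma(\epsilon_{n+1}, \epsilon_{n+2},\ldots)$, so $\mathcal{B}(\omega) = \bigcap_N \bigcup_{n\ge N} E_n$ is a tail event for the i.i.d.\ Bernoulli$(1/2)$ sequence of binary digits of $x$; Kolmogorov's zero-one law forces $|\mathcal{B}(\omega)| \in \{0,1\}$, hence $|\mathcal{B}(\omega)| = 1$. The main obstacle is that the \emph{pointwise} quasi-independence $|E_n \cap E_m| \le C|E_n||E_m|$ required by Lemma~\ref{lema:Borel Cantelli quasi} fails when $m-n$ is small and $\omega(n)$ is very small, as the ratio $|E_n\cap E_m|/(|E_n||E_m|)$ can blow up like $1/(\omega(n)\,2^{m-n})$; what rescues the argument is that the excess is summable in the aggregate, which is precisely what the second-moment (Chung--Erd\H{o}s) mechanism behind Lemma~\ref{lema:Borel Cantelli quasi} actually needs.
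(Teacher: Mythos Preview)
Your proof is correct and shares the paper's overall architecture---Kolmogorov's zero--one law reduces the problem to showing $|\mathcal{B}(\omega)|>0$, and positivity comes from a divergence Borel--Cantelli/second-moment argument---but you handle the key intersection estimate differently and, in fact, more carefully.

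The paper argues that the sets $U_n$ are quasi-independent in the strict pointwise sense $|U_n\cap U_m|\le C\,|U_n|\,|U_m|$ and then invokes Lemma~\ref{lema:Borel Cantelli quasi}. As you correctly point out in your closing remark, this can fail: if $\omega(n)$ is very small and $m=n+1$ with $\omega(m)/2^{m}>\omega(n)/2^{n}$, then every $I_{n,j}$ sits inside the concentric $I_{m,2j}$, so $U_n\subset U_m$ and the ratio $|U_n\cap U_m|/(|U_n||U_m|)=1/|U_m|$ is unbounded. The paper's line $|I_{n,j}\cap U_m|\le 2\omega(m)|I_{n,j}|$ tacitly uses the hypothesis $|I_{n,j}|>1/2^m$ from the preceding sentence, which need not hold for general bounded~$\omega$.

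Your route via the doubling map $T$ makes the $1/2^{m-n}$--periodicity of $T^{-(m-n)}(A_m)$ transparent and yields the honest estimate
\[
|E_n\cap E_m|\le |E_n|\,|E_m|+O\bigl(\omega(m)/2^{m-n}\bigr),
\]
whose excess, while not pointwise negligible, sums to $O(\mu_N)$. Feeding this into Paley--Zygmund (equivalently, the Chung--Erd\H{o}s inequality underlying Lemma~\ref{lema:Borel Cantelli quasi}) gives $|\mathcal{B}(\omega)|\ge 1/4$, and the tail argument finishes as in the paper. This is the right refinement; the paper's computation would need essentially the same correction term and the same aggregate-summability step to be made rigorous.
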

\begin{proof}
The Borel set $\mathcal{B}(\omega)$ is a tail event for the filtration generated by the independent identically distributed random variables $\epsilon_j, j \ge 1$, in $[0,1]$. Thus, by Kolmogorov $0{-}1$ law,  $|\mathcal{B}(\omega)|=0$ or $|\mathcal{B}(\omega)|=1$.

We will show next that $|\mathcal{B}(\omega)|>0$, to conclude that  $|\mathcal{B}(\omega)|=1$.

Write $\mathcal{B}(\omega)$ as
$$\mathcal{B}(\omega)=\bigcap\limits_{k \ge 1} \bigcup\limits_{n\ge k} U_n\,,$$
where $$U_n=\bigcup\limits_{0\le j \le 2^n} I_{n,j}\,,$$
and
$$
I_{n,j}=\Big(\frac{j}{2^n} -\frac{\omega(n)}{2^n},\frac{j}{2^n} +\frac{\omega(n)}{2^n}\Big) \cap [0,1]\, , \quad \mbox{for $0\le j \le 2^n$}\, .$$

We claim that the  sets $(U_n)_{n \ge 1}$ are  quasi-independent. This fact, taking into account Lemma \ref{lema:Borel Cantelli quasi}, completes the proof.

The boundedness of the function $\omega$ implies uniformly bounded overlap for the intervals in $U_n$, i.e, there existe a constant $C_0$ (depending only on $c_0$) so that
$$2\sum\limits_{0\le j \le 2^n}{\mbox{\Large $\chi$}}_{I_{n,j}}\le C_0\, , \quad \mbox{for each $n \ge 1$}\,,$$
and thus that
$$\sum\limits_{0\le j \le 2^n} |I_{n,j}|\le C_0 |U_n|\le C_0\sum\limits_{0\le j \le 2^n} |I_{n,j}|\, , \quad \mbox{for each $n \ge 1$}\,,$$
and therefore that
$$
\frac{1}{C_0}\, \omega(n)\le |U_n|\le C_0 \,\omega(n)\, , \quad \mbox{for each $n \ge 1$}\,.$$

Let $m >n \ge 1$.

The number of  points of the form $\{k/2^m,~k=0,\dots,2^m\}$ contained in any interval $I$ of size $|I|>1/2^m$ is at most  $ 2^{m{+}1} |I|$. Therefore, for $n \ge 1$ and $0 \le j \le 2^n$ we have that
$$|I_{n,j}\cap U_m|\le  2^{m{+}1}\, |I_{n,j}| \, \frac{\omega(m)}{2^m}= 2 \, \omega(m)\,  |I_{n,j}|\, .$$

We can then deduce that
\begin{equation*}
\begin{aligned}
|U_n\cap U_m| &= \sum_{j=0}^{2^n}  |I_{n,j}\cap U_m|\le 2 \sum_{j=0}^{2^n} \omega(m) \, |I_{n,j}| =2
 \sum_{j=0}^{2^n}  \frac{\omega(n)}{2^n}\, \omega(m)\\
&= 2 \, \omega(n)\, \omega(m)\le 2 \,C_0^2  \, |U_n|\, |U_m|\, .
\end{aligned}
\end{equation*}
This shows that the sets $U_n$ are quasi-independent.\end{proof}

\

Next, we turn now to the
\noindent \textit{Proof of Theorem {\textrm{\ref{main theorem}}}.}

As mentioned above, the fact that $\mathcal{H}_h\big(\mathcal{B}(\varphi)\big) =0$ when the series converges is immediate and does not require the boundedness conditions \eqref{eq:boundedness conditions}.

\medskip

We assume now that $\sum_{n=0}^\infty 2^n ~h\left(\dfrac{\varphi(n)}{2^n}\right)=\infty$.

Let $\theta$ be the auxiliary function in $\mathbb{N}$ given by
$$\theta(n)=2^n h\Big(\frac{\varphi(n)}{2^n}\Big)\, ,\quad \mbox{for $n \ge 1$}\,.$$

By hypothesis, the function $\theta(n)$ is bounded and $\sum_{n \ge 1} \theta(n)=\infty$. Therefore, by Lemma \ref{lemma:preceeding main} we have that
$$|\mathcal{B}(\theta)|=1\,.$$

Observe that
$$\mathcal{B}(\theta)=\bigcap_{j \ge 1}\bigcup_{n \ge j} U_n^h$$
where
$$
U_n^h=\bigcup_{0\le k \le 2^n}\bigg(\frac{k}{2^n}-h\Big(\frac{\varphi(n)}{2^n}\Big),\,\frac{k}{2^n}-h\Big(\frac{\varphi(n)}{2^n}\Big)\bigg)\cap [0,1]\,,\quad \mbox{for $n \ge 1$}\,.$$

The Mass Transference Principle, Corollary \ref{cor MTP},  gives us then that
$$\mathcal{H}_h(\mathcal{B}(\varphi))=\mathcal{H}_h([0,1])\,.$$\qed

\medskip

\begin{coro}\label{cor:size of dobinski}
The Dobi\'nski set $\mathcal{D}$ satisfies $\mathcal{L}{-}\mbox{\rm dim}(\mathcal{D})=1$ and
$$\mathcal{L}_1(\mathcal{D})=\infty\,.$$
\end{coro}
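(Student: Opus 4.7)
The plan is to apply Theorem \ref{main theorem} separately to each piece $\mathcal{D}(k)$ of the decomposition $\mathcal{D} = \bigcup_{k \ge 1} \mathcal{D}(k)$, using the family of logarithmic gauges $h_s(r) = 1/(\log 1/r)^s$, $s > 0$. The condition defining $\mathcal{D}(k)$, namely $|x - P_n(x)| < 2^{-2^n/k}$ for infinitely many $n$, reads as $\mathcal{D}(k) = \mathcal{B}(\varphi_k)$ with $\varphi_k(n) = 2^n \cdot 2^{-2^n/k}$. The doubly exponential decay of $\varphi_k(n)/2^n = 2^{-2^n/k}$ makes the boundedness hypotheses \eqref{eq:boundedness conditions} immediate to verify, and the monotonicity of $h_s(x)/x$ near $0$ is clear.

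The heart of the argument is a one-line computation tracking the critical series associated with Theorem \ref{main theorem}:
$$ \sum_{n \ge 1} 2^n\, h_s\!\Big(\frac{\varphi_k(n)}{2^n}\Big) = \Big(\frac{k}{\log 2}\Big)^s \sum_{n \ge 1} 2^{n(1-s)}. $$
For $s = 1$ this series diverges, so Theorem \ref{main theorem} forces $\mathcal{L}_1(\mathcal{D}(k)) = \mathcal{L}_1([0,1])$ for every $k$; for $s > 1$ it converges, so $\mathcal{L}_s(\mathcal{D}(k)) = 0$ for every $k$. Countable subadditivity then yields $\mathcal{L}_s(\mathcal{D}) = 0$ for all $s > 1$, hence $\mathcal{L}{-}\mbox{\rm dim}(\mathcal{D}) \le 1$, while the inclusion $\mathcal{D}(1) \subseteq \mathcal{D}$ transports the lower bound $\mathcal{L}_1(\mathcal{D}) \ge \mathcal{L}_1([0,1])$.

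It remains to observe that $\mathcal{L}_1([0,1]) = +\infty$, which I would deduce from the fact that $h_1(r)/r = 1/(r \log(1/r)) \to \infty$ as $r \to 0$: any cover of $[0,1]$ by balls of radii $r_i \le \delta$ satisfies $\sum r_i \ge 1/2$, hence $\sum h_1(r_i) \ge M(\delta) \sum r_i$ with $M(\delta) \to \infty$ as $\delta \to 0$, forcing $\mathcal{H}_{h_1}^\delta([0,1]) \to \infty$. Combining both bounds gives $\mathcal{L}{-}\mbox{\rm dim}(\mathcal{D}) = 1$ and $\mathcal{L}_1(\mathcal{D}) = \infty$. I foresee no real obstacle: the corollary is essentially a bookkeeping application of Theorem \ref{main theorem}, the only substance being the identification of $s=1$ as the critical exponent through the geometric series above.
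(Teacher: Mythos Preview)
Your proof is correct and follows essentially the same route as the paper's: apply Theorem~\ref{main theorem} to each $\mathcal{D}(k)=\mathcal{B}(\varphi_k)$ with $\varphi_k(n)=2^n 2^{-2^n/k}$ and the logarithmic gauges $h_s(x)=1/(\log(1/x))^s$, observing that the critical series diverges for $s=1$ and converges for $s>1$. You supply a few details the paper leaves implicit (the explicit form of the series and the verification that $\mathcal{L}_1([0,1])=\infty$ via $h_1(r)/r\to\infty$), but the argument is the same.
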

\begin{proof} Fix $k\ge1$ and consider the subset $\mathcal{D}(k)$ of $\mathcal{D}$.
Define $\varphi(n)=2^n 2^{-2^n/k}$, for $n \ge 1$.

If we take $h(x)=1/\ln(1/x)$, for $x \in (0,1]$,  then Theorem \ref{main theorem} gives us $\mathcal{L}_1(\mathcal{D}(k))=\mathcal{L}_1([0,1])=\infty$, for each $k\ge1$. A fortiori, $\mathcal{L}_1(\mathcal{D})=\infty$.

If for $s>1$, we take $h(x)=1/\ln(1/x)^s$, for $x \in (0,1]$, then from Theorem \ref{main theorem} (or just directly) we get $\mathcal{L}_s(\mathcal{D}(k))=0$, for each $k \ge 1$. We conclude that $\mathcal{L}_s(\mathcal{D})=0$, and, consequently that $\mathcal{L}{-}\mbox{\rm dim}(\mathcal{D})=1$.\end{proof}

\medskip

With similar arguments we get the following corollaries.

\begin{coro} Let $\alpha>0$. Consider the set $\mathcal{E}_\alpha \subset[0, 1]$ consisting of those numbers so that, after the $n$-th digit in its dyadic expansion, there are  at least $n\alpha$ digits equal to $0$, and this for  infinitely many $n$. Then
$$
\mbox{\rm dim}~(\mathcal{E}_\alpha)=\frac{1}{1+\alpha}\,.
$$
Besides  $\mathcal{H}_{1/(1+\alpha)}(\mathcal{E}_\alpha)=\infty$.
\end{coro}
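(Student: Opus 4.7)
I would deduce this from Theorem \ref{main theorem} by matching $\mathcal{E}_\alpha$, up to a symmetric twin, to a suitable $\mathcal{B}(\varphi)$.

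My plan is to set $\varphi(n) = 2^{-n\alpha}$ and work with the gauge $h(r) = r^s$, first for the critical exponent $s = 1/(1+\alpha)$. With this choice, the condition $|x - P_n(x)| < \varphi(n)/2^n = 2^{-n(1+\alpha)}$ defining $\mathcal{B}(\varphi)$ forces the $\lceil n\alpha \rceil$ digits immediately after $\epsilon_n(x)$ to be constant, namely all $0$'s when $\epsilon_{n+1}(x) = 0$ and all $1$'s when $\epsilon_{n+1}(x) = 1$. Writing $\mathcal{E}_\alpha'$ for the "ones" analogue of $\mathcal{E}_\alpha$, this should yield, modulo the countable set $\mathcal{Q}$,
\[
\mathcal{B}(\varphi) = \mathcal{E}_\alpha \cup \mathcal{E}_\alpha'.
\]

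Next I would handle the upper bound on $\mbox{\rm dim}(\mathcal{E}_\alpha)$: for any $s > 1/(1+\alpha)$ the series $\sum_n 2^n h(\varphi(n)/2^n) = \sum_n 2^{n(1-s(1+\alpha))}$ converges, so the convergence half of Theorem \ref{main theorem} gives $\mathcal{H}_s(\mathcal{B}(\varphi)) = 0$ and therefore $\mathcal{H}_s(\mathcal{E}_\alpha) = 0$, whence $\mbox{\rm dim}(\mathcal{E}_\alpha) \le 1/(1+\alpha)$.

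For the matching lower bound and the assertion $\mathcal{H}_{1/(1+\alpha)}(\mathcal{E}_\alpha) = \infty$, I would take $s = 1/(1+\alpha)$: the same series reduces to $\sum 1 = \infty$ and the boundedness hypotheses \eqref{eq:boundedness conditions} hold trivially since $\varphi(n) \le 1$ and $2^n h(\varphi(n)/2^n) = 1$. The divergence half of Theorem \ref{main theorem} then delivers $\mathcal{H}_{1/(1+\alpha)}(\mathcal{B}(\varphi)) = \mathcal{H}_{1/(1+\alpha)}([0,1]) = \infty$. Since the reflection $\tau(x) = 1 - x$ is an isometry that, outside $\mathcal{Q}$, swaps $0$'s and $1$'s in the binary expansion and thus exchanges $\mathcal{E}_\alpha$ with $\mathcal{E}_\alpha'$, subadditivity of $\mathcal{H}_{1/(1+\alpha)}$ forces both sets to carry infinite $1/(1+\alpha)$-Hausdorff measure.

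\emph{Main obstacle.} The only delicate step will be isolating the "zeros" half of $\mathcal{B}(\varphi)$: Theorem \ref{main theorem} is blind to the sign of $x - P_n(x)$ while $\mathcal{E}_\alpha$ insists on zeros. The symmetry $x \mapsto 1 - x$ is meant to sidestep this cleanly; the rest is a routine calibration of $\varphi$ and $h$ against the series appearing in Theorem \ref{main theorem}.
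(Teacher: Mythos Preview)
Your proposal is correct and matches the paper's intended route: the paper gives no proof here beyond ``with similar arguments'', meaning precisely the application of Theorem~\ref{main theorem} with $\varphi(n)=2^{-n\alpha}$ and $h(r)=r^{s}$ that you carry out. Your use of the reflection $x\mapsto 1-x$ to extract the zeros-only set $\mathcal{E}_\alpha$ from $\mathcal{B}(\varphi)=\mathcal{E}_\alpha\cup\mathcal{E}_\alpha'$ is a detail the paper leaves implicit, and it is the right way to close that gap.
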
%
%

\medskip

\begin{coro}
 \label{coro2}
 Let $\alpha>0$. Consider the set $\mathcal{F}_\alpha \subset[0, 1]$ consisting of those numbers so that, after the $n$-th digit in its dyadic expansion, there are  at least $2^{n\alpha}$ digits equal to $0$, and this for infinitely many $n$. Then
$$
\mathcal{L}{-}\mbox{\rm dim}~(\mathcal{F}_\alpha)=\frac{1}{\alpha}\,.
$$
Besides  $\mathcal{L}_{1/\alpha}(\mathcal{F}_\alpha)=\infty.$
\end{coro}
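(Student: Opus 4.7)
The plan is to mimic the argument given for Corollary \ref{cor:size of dobinski} (which treats the Dobi\'nski set), applying Theorem \ref{main theorem} with $h(x)=1/(\log 1/x)^s$ to sets of the form $\mathcal{B}(\varphi)$ that sandwich $\mathcal{F}_\alpha$ from both sides. The key new ingredient compared to the proof of Corollary \ref{cor:size of dobinski} is that $\mathcal{F}_\alpha$ only accounts for long strings of zeros (not of ones), so a symmetry argument is needed to recover the divergence half.

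For the upper bound on dimension, set $\varphi_1(n)=2^n\cdot 2^{-2^{n\alpha}}$, so that $\varphi_1(n)/2^n=2^{-2^{n\alpha}}$. If $x\in\mathcal{F}_\alpha$ then infinitely often the block $\epsilon_{n+1},\dots,\epsilon_{n+\lceil 2^{n\alpha}\rceil}$ is identically zero, and therefore $P_n(x)=S_n(x)$ and $|x-P_n(x)|\le 2^{-n-\lceil 2^{n\alpha}\rceil}<\varphi_1(n)/2^n$. This gives $\mathcal{F}_\alpha\subset\mathcal{B}(\varphi_1)$. Taking $h(x)=1/(\log 1/x)^s$ for any $s>1/\alpha$, the series
\[
\sum_{n\ge 1} 2^n\, h\!\left(\frac{\varphi_1(n)}{2^n}\right)=\sum_{n\ge1}\frac{2^n}{(2^{n\alpha}\log 2)^s}=\frac{1}{(\log 2)^s}\sum_{n\ge1}2^{n(1-s\alpha)}
\]
converges, so Theorem \ref{main theorem} yields $\mathcal{L}_s(\mathcal{F}_\alpha)\le\mathcal{L}_s(\mathcal{B}(\varphi_1))=0$, hence $\mathcal{L}\text{-}\dim(\mathcal{F}_\alpha)\le 1/\alpha$.

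For the lower bound and the claim $\mathcal{L}_{1/\alpha}(\mathcal{F}_\alpha)=\infty$, introduce the twin set $\mathcal{F}'_\alpha$ (the analogue of $\mathcal{F}_\alpha$ with ones in place of zeros) and a smaller approximation function $\varphi_2(n)=2^{-\lceil 2^{n\alpha}\rceil-2}$. A direct case analysis on $\epsilon_{n+1}(x)$ shows that any $x\in\mathcal{B}(\varphi_2)$ must satisfy, for infinitely many $n$, either $\epsilon_{n+1}=\dots=\epsilon_{n+\lceil 2^{n\alpha}\rceil+2}=0$ or the same block equal to $1$; hence $\mathcal{B}(\varphi_2)\subset\mathcal{F}_\alpha\cup\mathcal{F}'_\alpha$. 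With $s=1/\alpha$ and the same choice of $h$, both boundedness conditions in \eqref{eq:boundedness conditions} hold (the general term is asymptotic to $(\log 2)^{-1/\alpha}$) and the series diverges, so Theorem \ref{main theorem} gives $\mathcal{L}_{1/\alpha}(\mathcal{B}(\varphi_2))=\mathcal{L}_{1/\alpha}([0,1])=\infty$. The involution $\sigma(x)=1-x$ is an isometry of $[0,1]$ that flips $0$'s and $1$'s in the binary expansion (outside the negligible set $\mathcal{Q}$), so $\sigma(\mathcal{F}_\alpha)=\mathcal{F}'_\alpha$ and $\mathcal{L}_{1/\alpha}(\mathcal{F}_\alpha)=\mathcal{L}_{1/\alpha}(\mathcal{F}'_\alpha)$. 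Subadditivity then forces $\mathcal{L}_{1/\alpha}(\mathcal{F}_\alpha)=\infty$, and combined with the previous step we conclude $\mathcal{L}\text{-}\dim(\mathcal{F}_\alpha)=1/\alpha$.

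The main piece of technical care is choosing $\varphi_1$ and $\varphi_2$ so that both inclusions hold and both ends of Theorem \ref{main theorem} can be invoked with the correct exponent at $s=1/\alpha$; this is mostly bookkeeping with the integer parts of $2^{n\alpha}$. The only genuine twist relative to Corollary \ref{cor:size of dobinski} is the symmetry step, which is essential because $\mathcal{F}_\alpha$ corresponds to only one of the two sides ($\epsilon_{n+1}=0$) of the symmetric set $\mathcal{B}(\varphi_2)$.
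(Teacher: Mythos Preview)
Your argument is correct and follows the route the paper intends: the paper leaves the proof implicit under ``With similar arguments'', meaning Theorem~\ref{main theorem} applied with $h(x)=1/(\log 1/x)^s$ and $\varphi(n)\asymp 2^{n}2^{-2^{n\alpha}}$, exactly as you do for both the convergence and divergence halves.

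You have, in addition, spotted and handled a point the paper passes over in silence: $\mathcal{F}_\alpha$ records only long blocks of \emph{zeros}, whereas the sets $\mathcal{B}(\varphi)$ of Theorem~\ref{main theorem} are symmetric in zeros and ones, being phrased via the nearest dyadic $P_n(x)$. Your remedy via the involution $x\mapsto 1-x$ and subadditivity is correct and economical. An equivalent alternative, perhaps closer in spirit to what the paper has in mind, is to rerun Lemma~\ref{lemma:preceeding main} and Theorem~\ref{main theorem} with the one-sided neighbourhoods $[j/2^n,\,j/2^n+\varphi(n)/2^n)$ in place of the symmetric balls; quasi-independence, the $0$--$1$ law, and the Mass Transference Principle go through verbatim, and the resulting $\limsup$ set sits inside $\mathcal{F}_\alpha$ directly, with no need to introduce $\mathcal{F}'_\alpha$.
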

%

\section{Hausdorff measure of willow sets}\label{sect:Cantor}
In this section we present and prove a result regarding the Hausdorff measure of Cantor-type sets, which we call willow sets,  that could arise in several settings, in particular in the context of dyadic approximations and the Dobi\'nski set.

The following  result (see for instance \cite{matt})  is a standard tool to establish  lower bounds on Hausdorff measures (and dimensions).

\begin{lem}[Frostman Lemma]
\label{lemma:frost}
Let $h$ be a gauge function. Suppose that a set $E\subset \mathbb{R}$ carries a probability measure $\mu$ such that, for any interval $I$,
\begin{equation}
\label{eqn:frost}
\mu(I)\leq c~ h(|I|)
\end{equation}
for some constant $c>0$. Then $\mathcal{H}_h(E)>0$.
\end{lem}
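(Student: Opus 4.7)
The plan is to estimate $\mathcal{H}_h(E)$ from below by using $\mu$ as a witness in a direct covering argument, in the spirit of Frostman's original proof. I would fix $\delta>0$ and take any admissible cover $E\subset \bigcup_{i\ge 1} B(x_i,r_i)$ with $r_i\le \delta$. Since $\mu$ is a probability measure carried by $E$, countable subadditivity gives
$$
1=\mu(E)\le \sum_{i\ge 1}\mu\bigl(B(x_i,r_i)\bigr).
$$
Each ball $B(x_i,r_i)$ is an interval of length $2r_i$, so the Frostman hypothesis \eqref{eqn:frost} yields $\mu(B(x_i,r_i))\le c\,h(2r_i)$, and hence
$$
\sum_{i\ge 1} h(2r_i)\ge \frac{1}{c}.
$$

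The next step is to pass from $h(2r_i)$ to $h(r_i)$, since the pre-measure $\mathcal{H}_h^\delta$ is defined in terms of $h$ evaluated at the radii. For the gauges of interest in this paper (namely $h(r)=r^s$ and $h(r)=1/(\log 1/r)^s$) a doubling estimate $h(2r)\le C_h\,h(r)$ holds for all small $r$, with $C_h$ depending only on $h$. More generally, the condition that $h(x)/x$ be non-increasing near $0$, which is available in every application of the lemma in this paper, already yields $h(2r)\le 2\,h(r)$. Choosing $\delta$ small enough that the relevant doubling estimate is uniform for all $r_i\le \delta$, one obtains
$$
\sum_{i\ge 1} h(r_i)\ge \frac{1}{c\,C_h}.
$$

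Taking the infimum over all admissible covers then gives $\mathcal{H}_h^\delta(E)\ge 1/(c\,C_h)$, and letting $\delta\to 0$ yields $\mathcal{H}_h(E)\ge 1/(c\,C_h)>0$, as required. The only mildly non-trivial step is the doubling control for $h$; everything else reduces to the one-line estimate $1=\mu(E)\le \sum_i\mu(B_i)\le c\sum_i h(|B_i|)$. If one prefers a fully gauge-free formulation, one may instead absorb the factor $h(2r)/h(r)$ into the constant $c$ at the outset by rephrasing the Frostman hypothesis as $\mu(B(x,r))\le c\,h(r)$, after which the argument collapses to pure countable subadditivity.
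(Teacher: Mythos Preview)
The paper does not prove this lemma: it is stated as a standard tool with a reference to \cite{matt}, and no argument is given. Your covering argument is exactly the standard proof, and it is correct. Your discussion of the factor-of-two mismatch between $h(|I|)=h(2r)$ in the hypothesis and $h(r)$ in the definition of $\mathcal{H}_h^\delta$ is accurate; the doubling bound $h(2r)\le 2h(r)$ that follows from $h(x)/x$ non-increasing (available in every use of the lemma in the paper) resolves it, and your final remark that one may simply normalise the hypothesis to $\mu(B(x,r))\le c\,h(r)$ is the cleanest way to dispose of the issue altogether.
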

In particular,  if $h(r)=r^s$ or $h(r)={1}/{(\log 1/x)^s}$ and $E$ carries such a probability measure, then we would get that $\mbox{\rm dim}(E)\geq s$ or $\mathcal{L}{-}\mbox{\rm dim}(E)\geq s$ respectively.

\medskip

\subsection{Willow sets}We proceed next to describe the construction of willow sets.

A \textit{generation} is a finite collection of disjoint closed intervals in $[0,1]$. Generations are composed of \textit{families}. The intervals of a given family of a given generation have the same length.

We will have generations $\mathfrak{G}_k, k \ge 0$. Generation $0$ consists of just one family with just one interval, namely,  $[0,1]$.

Generation $\mathfrak{G}_k$ consists of $M_k$ families $\mathfrak{F}_{k,j}, 1\le j \le M_k$:
$$
\mathfrak{G}_k= \bigcup_{j=1}^{M_k}\mathfrak{F}_{k,j}\, , \quad \mbox{for $k \ge 0$}\,.$$
Every interval of generation $k$ is required to be contained in one (unique) interval of generation $k{-}1$ called its progenitor.

The common size of all the intervals of the family $\mathfrak{F}_{k,j}$ is denoted by $A({k,j})$; the indexing of the families of generation $k$ is such that $A({k,1}) \ge A({k,2}) \ge \dots \ge A({k,M_k})$.

For $k\ge 0$, we denote by $\mathcal{W}_k$ the union of all  the intervals of generation $k$. Observe that $\mathcal{W}_0=[0,1]$ and that $\mathcal{W}_k \subset \mathcal{W}_{k{-}1}$, for each $k \ge 1$.

\medskip

Our willow set is defined as
$$\mathcal{W}=\bigcap_{k \ge 1}\mathcal{W}_k\, .$$

\medskip

Next we spell out a number of requirements on the sizes of the intervals of families and generations and their distribution that willow sets must satisfy.

\medskip

\noindent A) The number $M_k$ of families of generation $k$ is so large that:
\begin{equation}\label{Mk}
M_k~|J|>1 \textrm{ for any interval}~ J\in \mathfrak{G}_{k-1}\, , \quad \mbox{for $k \ge 1$}\,.
\end{equation}
 Observe that for \eqref{Mk} to hold, it suffices that $M_k$ is so large that
$$
M_k A(k{-}1, M_{k{-}1}) >1\, , \quad \mbox{for $k \ge 1$}\,.
$$

\medskip

\noindent B) Denote the number of intervals in the family $\mathfrak{F}_{k,j}$ which are contained in a given interval  $J\in\mathfrak{G}_{k-1}$ by
   $N_{k,j}(J)$. Although this number  may depend  upon $J$, it is assumed  that there exists a function $g(j, k)$ such that
   \begin{equation}\label{equi}
   \frac{1}{c}~|J|~g(j, k)\leq N_{k,j}(J)\leq c~|J| ~g(j, k)
   \end{equation}
for some constant $c>0$.

\medskip

\noindent C) \textit{Equidistribution property}. It is required  that the intervals belonging to any given family are equidistributed in their  progenitor.

More precisely, we assume that for any interval $I\in [0,1]$, the number of intervals in the family $\mathfrak{F}_{k,j}$ that intersect $I$ is at most $c~|I| ~g(j, k)$;

\medskip

\noindent D) For some $c>0$ and for any  $k \ge 1$, the distance between any two intervals $I_1$ and $I_2$  that belong to $\mathfrak{G}_k$ satisfies
\begin{equation}\label{dist}
\textrm {dist} (I_1,I_2)\geq c~ \min_{I\in \mathfrak{G}_k } |I|=c~A(k, M_k).
\end{equation}

\

\subsection{Hausdorff measure of willow sets} With these requirements in the construction of willow sets $\mathcal{W}$ we have:

\begin{theo}\label{hausdorff of willow}
Let $h$ be a gauge function such that $h(x)/x$ is not increasing for $x$ small enough. With the notation above, suppose that there exists a constant  $c>0$ such that  for all $k \ge 1$ and $1 \le j \le M_k$,
\begin{equation}
\label{eqn:primos:geq}
g(j, k)\geq\frac{c}{h(A(k,j))},
\end{equation}
Then
\[
\mathcal{H}_h(\mathcal{W})>0,
\]
\end{theo}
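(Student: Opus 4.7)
The plan is to apply Frostman's Lemma (Lemma~\ref{lemma:frost}): I will construct a probability measure $\mu$ supported on $\mathcal{W}$ satisfying $\mu(I)\le C\,h(|I|)$ for every interval $I\subset\mathbb{R}$, with $C$ a constant independent of $I$. Once this is in place, Lemma~\ref{lemma:frost} gives $\mathcal{H}_h(\mathcal{W})>0$.

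To construct $\mu$, I would perform a top-down inductive redistribution along the generations $\mathfrak{G}_k$. Start with $\mu=1$ on $[0,1]$; given the mass $\mu(J)$ already assigned to $J\in\mathfrak{G}_{k-1}$, split it among the intervals $I\in\mathfrak{G}_k$ with $I\subset J$, weighting each child $I\in\mathfrak{F}_{k,j}$ in proportion to $h(A(k,j))$. The resulting sequence $(\mu_k)$ has a weak limit supported on $\bigcap_k\mathcal{W}_k=\mathcal{W}$; properties A, B, and D guarantee the construction is well-defined and that the total mass is preserved, so the limit is a probability measure.

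The Frostman bound is then verified as follows. Given an interval $I$ with $|I|=r$, choose a generation $k$ and a family index $j$ for which the scale $A(k,j)$ is comparable to $r$. The equidistribution property~C bounds the number of intervals of $\mathfrak{F}_{k,j}$ that meet $I$ by $c\,|I|\,g(j,k)$, and the hypothesis \eqref{eqn:primos:geq} converts this to $\le c'\,|I|/h(A(k,j))$. Multiplying by the mass $\asymp h(A(k,j))$ assigned to each such interval, summing the contributions over the relevant families, and using the separation property~D to control overlapping intervals near $\partial I$, I expect to obtain $\mu(I)\lesssim |I|$. Since $h(x)/x$ is non-increasing, $x\le h(x)/h(1)$ for $x\in(0,1]$, so $\mu(I)\lesssim h(|I|)$, as required.

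The main obstacle is the measure construction itself: a willow has several families per generation with different scales, and the target estimate $\mu(I)\asymp h(A(k,j))$ must propagate consistently through the inductive refinement without the total mass either collapsing or exploding. The key quantitative input is exactly the hypothesis $g(j,k)\,h(A(k,j))\ge c$, which ensures there are enough intervals at each scale to absorb the assigned mass; properties A (enough families), B (uniform sibling counts), and D (separation) ensure that the local bookkeeping matches the global Frostman estimate across all scales.
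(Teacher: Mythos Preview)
Your overall strategy---build a mass distribution on $\mathcal{W}$ and invoke Frostman's Lemma---is precisely the paper's. Your measure (weighting each child of $J$ in proportion to $h(A(k,j))$) differs from the paper's choice $\mu_k(L)=\mu_{k-1}(J)/\bigl(M_k\,N_{k,j}(J)\bigr)$ (equal split among the $M_k$ families, then equal split within each family), but either construction yields, via~(A), (B) and the trivial bound $\mu_{k-1}(J)\le 1$, the same key estimate $\mu_k(L)\lesssim h(A(k,j))$ for $L\in\mathfrak{F}_{k,j}$. This is the place where hypothesis~\eqref{eqn:primos:geq} enters, and in the correct direction: $1/g(j,k)\le h(A(k,j))/c$.

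The genuine gap is in your Frostman verification for a general interval $I$. You assert that ``hypothesis~\eqref{eqn:primos:geq} converts $c\,|I|\,g(j,k)$ to $\le c'\,|I|/h(A(k,j))$'', but \eqref{eqn:primos:geq} is a \emph{lower} bound on $g(j,k)$ and supplies no upper bound; you are applying it in the wrong direction. Consequently your intermediate claim $\mu(I)\lesssim|I|$ is not established---and in the cases of interest (where $|\mathcal{W}|=0$, as in the Dobi\'nski application) it is outright false, since a probability measure supported on a Lebesgue null set cannot satisfy $\mu(I)\le C|I|$ for all intervals. The paper bypasses any upper bound on $g(j,k)$ altogether: given $I$, one uses~(D) to locate the unique interval $\widetilde{I}$ of the appropriate generation containing $I$, and then~(B) and~(C) together show that the children of $\widetilde{I}$ carry mass proportionally to length, so that the factors $g(l,k{+}1)$ cancel and one obtains $\mu(I)\lesssim(|I|/|\widetilde{I}|)\,\mu(\widetilde{I})$. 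Combining this with the already-proved $\mu(\widetilde{I})\lesssim h(|\widetilde{I}|)$ and the monotonicity of $h(x)/x$ gives $\mu(I)\lesssim h(|I|)$.
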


\begin{proof}

 In order to apply Frostman Lemma, we construct  an appropriate  probability measure $\mu$ on $\mathcal{W}$.

   Let $\mu_0$ be the Lebesgue measure in $\mathcal{W}_0=[0,1]$. In a recursive way, define  for any $k \in \mathbb{N}$ the probability measure $\mu_k$ supported   on $\mathcal{W}_k$, in such a way that
the measure of an interval $J$ of generation $k{-}1$ is distributed among  its descendants $\{L; L \subset J ~\textrm{with}~ L\in \mathfrak{G}_k\}$  according to the formula
$$
\mu_k(L)=\frac{\mu_{k-1}(J)}{M_k~ N_{k, j}(J)}\,, \quad  \mbox{if $L \in \mathcal{F}_{k,j}$}\, .
$$
Besides, in each such interval $L$, the measure $\mu_k$ is a multiple of Lebesgue measure restricted to that interval.

The measures $(\mu_k)_{k\in\mathbb{N}}$ converge weakly to a probability measure $\mu$ supported on $\mathcal{W}$ that coincides with $\mu_k$ on each interval of generation $k$.

\

 We claim that condition \eqref{eqn:frost} of Frostman's Lemma holds for any interval $I \subset [0,1]$

 Consider first the case in which $I$ is actually an interval of a certain generation $k$. Then $I \in \mathfrak{F}_{k,j}$, for some $j$ such that $1 \le j \le M_k$ and so the length of $I$ is $A_{k,j}$. Let $J\in \mathfrak{G}_{k{-}1}$ be its progenitor. By \eqref{equi}, \eqref{Mk} and \eqref{eqn:primos:geq}, we get that
\begin{equation}
\label{eqn:primos:k}
\mu(I)=\frac{\mu_{k-1}(J)}{M_k~ N_k^j(J)}\lesssim \frac{1}{M_k ~|J|~ g(j, k)}
\lesssim  h(A(k,j))\simeq  h(|I|).
\end{equation}

Next  we consider a  general interval $I \subset [0, 1]$. Choose $k$ such that
\[
A(k,M_k)<|I|<A(k{-}1, M_{k{-}1}).
\]
Because of ~\eqref{dist}, the interval $I$ intersects at most one interval $J$ of generation $k{-}1$, and eventually some of the  descendants $L\in \mathfrak{G}_{k}$ of $J$.

We split the proof that \eqref{eqn:frost} holds for this interval $I$  into two cases:

\noindent $i)$ \quad Suppose that $I$ is contained in one  interval $\tilde{I}_k$ of generation $k$. Thus, by \eqref{eqn:primos:k} we already know that $\mu(\tilde{I}_k) \lesssim h(\tilde{I}_k)$.

   To estimate $\mu(I)$ we need to consider the family of intervals  $\mathcal{F}$ which consists  of all the intervals $L$ of generation $k{+}1$ such that  both $L\subset \tilde{I}_k$ and that  $L \cap I\neq \emptyset$.

     Recall that  by  the equidistribution property, the number of intervals in a given family $\mathfrak{F}_{l,k+1}$ that intersect $I$ is bounded by $ g(l,k+1) |I|$. Therefore,
\[
\mu(I)\leq\sum_{L\in \mathcal{F}}\mu_{k+1}(L)\le \sum_{l=1}^{M_{k+1}}\frac{\mu_k(\tilde{I}_k)}{M_{k+1}~ N_{k+1,l}(\tilde{I}_k)}~g(l, k+1)~ |I|
\]
On the other hand,  by \eqref{equi}, $N_{k+1, l}(\tilde{I}_k)\simeq g(l,k+1) |\tilde{I}_k|$, thus
\[
\mu(I)\lesssim \mu_k(\tilde{I}_k)\frac{|I|}{|\tilde{I}_k|}
\lesssim \frac {h(|\tilde{I}_k|)}{|\tilde{I}_k|}|I|\lesssim h(|I|)
\]
since $h(x)/x$ is not increasing sufficiently close to $0$.

\medskip

\noindent $ii)$ \quad Suppose that $I$ is not contained in any interval of generation $k$. In any case, interval $I$ intersects at most one interval $J$ of generation $k{-}1$, as we have already pointed out. But in this case, we can assume without loss of generality that $I$ is contained in $J$, and repeat the argument above replacing the index $k$ with $k{-}1$.

\end{proof}

\medskip

\subsection{Application of willow sets to the Dobi\'nski set} Next, we shall apply Theorem \ref{hausdorff of willow} to give an alternative  proof of Corollary \ref{cor:size of dobinski}.

The idea is to construct an appropriate willow set $\mathcal{W}\subset \mathcal{D}$ and apply Theorem \ref{hausdorff of willow}. As we will observe later, $\mathcal{W}$ is actually contained in the set $\mathcal{T}$ described in the introduction, which consists of those points in $\mathcal{D}(1)$ with long strings of zeros.

For any $k$ we pick two natural numbers $M_k$ and $n_k$ that will be determined later on.

For any $j=1,\dots,M_k$, the family $\mathfrak{F}_{k,j}$ of  generation $k$ consists of   those \lq\lq maximal\rq\rq\,  intervals with left extreme at the dyadic points $\{m/2^{n_k+j}\}_{m=0}^{n_k+j-1}$ and length
\begin{equation}
\label{eqn:dob:length}
A(k,j)=2^{-(n_k+j+2^{n_k+j})}\,, \qquad \mbox{for $j=1,\dots,M_k$}\,,
\end{equation}
that are contained in an interval of generation $k-1$. By \lq\lq maximal\rq\rq intervals we mean that at each dyadic point under consideration we associate the interval with the biggest length, within the generation.
 Observe that the requirements  (A)-(D) hold:

\begin{description}
\item[(A)] Given $\mathcal{W}_{k-1}$,  we choose $M_k$ so that \eqref{Mk} holds.
\item[(D)] Next, we choose $n_k$ large enough so that there are intervals of generation $k$ inside any interval of the previous generation. For instance, $n_k>> 2^{n_{k-1}+M_{k-1}}$. Besides,  \eqref{dist} holds if the length of the largest interval of generation $k$ fills a (uniformly) bounded fraction of the distance between two consecutive dyadic numbers in $\mathcal{Q}_{n_k+M_k}$, that is
\[
2^{-(n_k+M_k)}-A(k, 1)\ge c~A(k, M_k)
\]
By the choice we made of the lengths $A(k, j),$ this condition  can be written as
\[
2^{-(n_k+M_k)}-2^{-(n_k+1+2^{n_k+1})}\ge c~2^{-(n_k+M_k+2^{n_k+M_k})}.
\]
Recall that $M_k$ at this stage is fixed, hence any large enough $n_k$ will work.
\item[(B)-(C)] Each interval in $\mathcal{F}_{k, j}$ which is contained in an interval $J$ in $\mathfrak{G}_{k-1}$ has a left end-point  in a dyadic number $l/2^{n_k+j}$, $l=0,\dots, 2^{n_k+j}-1$, with the additional observation that $l$ must be odd, whenever $j>1$. Therefore
\[
N_{k, j}(J)\sim2^{n_k+j}|J|.
\]
which proves (B) with $g(j,k)=2^{n_k+j}$. A similar argument shows that (C) holds as well.

\end{description}
The set $\mathcal{W}=\bigcap_{k\ge1}\mathcal{W}_k$ constructed in this fashion is contained in $\mathcal{D}(1)$, and therefore in $\mathcal{D}$.

Choosing $h(x)={1}/{\log(1/x)}$ we get that  $g(j, k)\thicksim 1/h(A(k,j))$, and  by Theorem \ref{hausdorff of willow} we can conclude that $\mathcal{L}_1(\mathcal{D})>0$ and therefore  $\mathcal{L}{-}\mbox{\rm dim} (\mathcal{D})\geq 1$.

As shown in the previous section,  $\mathcal{L}{-}\mbox{\rm dim} (\mathcal{D})\leq 1$. Hence, we may conclude that $\mathcal{L}{-}\mbox{\rm dim} (\mathcal{D})=1$. It turns out that once we know that the corresponding Hausdorff measure is positive,  due to the translation invariance property of the set, we get that its measure $\mathcal{L}_1(\mathcal{D})=\infty$, see Lemma 2.1 in \cite {olsen}.

\section{Logarithmic capacity of the Dobi\'nski set}\label{sect:capacity}

There are many connections between the logarithmic capacity of a set and its Hausdorff dimension.

In particular, if the dimension of the set is positive, then its logarithmic capacity is positive as well. On the other hand,  Erd\"{o}s and Gillis (\cite{ErdosGillis}, 1937)  have shown that if  $E\subset \mathbb{R}$ is a compact set with $\mathcal{L}_1 (E) < \infty$ then $\textrm{log cap}(E)=0$. The converse does not hold, in fact there are examples of sets $E$, all satisfying that   $\mathcal{L}_1(E) = \infty$, but with arbitrary logarithmic capacity.

The Dobi\'nski set $\mathcal{D}$ has $\mathcal{L}_1(\mathcal{D}) = \infty$; naturally, we ask: is  $\textrm{log cap}(\mathcal{D})=0$?
Or, on the contrary, are there closed subsets of the Dobi\'nski set with positive logarithmic capacity?

\

\small

\noindent \textit{Alberto Dayan:} Department of Mathematics and Statistics, Washington University in St Louis, USA.
\texttt{alberto.dayan@wustl.edu}

\medskip

\noindent \textit{Jos\'e L. Fern\'andez:} Departamento de Matem\'aticas, Universidad Aut\'onoma de Madrid, Spain.
\texttt{joseluis.fernandez@uam.es}

\medskip

\noindent \textit{Mar\'ia J. Gonz\'alez:} Departamento de Matem\'aticas, Universidad de C\'adiz, Spain.\newline
\texttt{majose.gonzalez@uca.es}

\end{document}